\documentclass[a4paper,11pt]{article}
\usepackage[top=2.5cm,bottom=2.5cm,left=2.2cm,right=2.2cm]{geometry}
\usepackage{amsfonts}
\usepackage{mathrsfs,amscd,amssymb,amsthm,amsmath,bm,graphicx,psfrag,subfigure,url,mathtools}
\usepackage{pict2e}
\usepackage{stfloats}
\usepackage{psfrag,amsmath}
\usepackage{tikz}
\usepackage{indentfirst}
\usepackage{hyperref}
\usepackage{bookmark}
\usepackage{enumerate}
\usepackage{enumitem}
\usepackage{latexsym,euscript,epic,eepic,color}
\usepackage{multirow}
\usepackage{multicol}
\usepackage{longtable}
\usepackage{adjustbox}
\usepackage[all]{xy}
\usepackage{setspace}
\usepackage{epstopdf}
\allowdisplaybreaks
\usepackage{authblk}
\usepackage{cite}
\makeatletter

\usepackage{authblk}

\renewcommand{\@seccntformat}[1]{{\csname the#1\endcsname}{\normalsize .}\hspace{.5em}}
\makeatother

\usepackage{ifpdf}
\usepackage{indentfirst}

\newtheorem{theorem}{Theorem}[section]
\newtheorem{proposition}[theorem]{Proposition}

\newtheorem{claim}{Claim}
\newtheorem{lemma}[theorem]{Lemma}

\newtheorem{conjecture}{Conjecture}[section]

\newcommand{\ex}{\operatorname{ex}}

\newenvironment{wst}
{\setlength{\leftmargini}{1.5\parindent}
\begin{itemize}
\setlength{\itemsep}{-1.1mm}}
{\end{itemize}}

\title{\bf  
On the large-clique version of the  Erd\H{o}s-S\'os conjecture\thanks{
{\it Email addresses}: chengkunmath@163.com (K. Cheng), tyr2290@163.com (Y. Tang).}}

\author{Kun Cheng}
\author{Yurui Tang\thanks{Corresponding author}}
\affil{Department of Mathematics, East China Normal University, Shanghai, 200241, China}
\date{}
\begin{document}
\maketitle

\begin{abstract}
For graphs $H$ and $F$, let $\operatorname{ex}(n,H,F)$ denote the maximum number of copies of $H$ in an $F$-free graph of order $n$. Motivated by the Erd\H{o}s-S\'{o}s conjecture, Gerbner and Palmer and, independently, Zhao and Peng  conjectured that for every tree $T$ of order $k$ and every $3\le r\le k-1$, 
$$
\operatorname{ex}(n,K_r,T)
=
a\binom{k-1}{r}+\binom{b}{r},
$$
where $n=a(k-1)+b$ with $0\le b<k-1$. In this paper, we confirm the conjecture for
$r\ge \left\lceil (2k-1)/3\right\rceil$ and characterize all extremal graphs.

\end{abstract}

\section{\normalsize Introduction}
\noindent We consider finite simple graphs and use standard terminology and notation from \cite{Bollobas}, \cite{bondy1} and \cite{West}.    
Let $G$ be a graph with vertex set $V(G)$ and edge set $E(G)$. Then $|G|:=|V(G)|$ is called the {\it order} of $G.$   
For $v\in V(G)$, let $N_G(v)$ be the neighborhood  of $v$ in $G$, respectively. 
We write $K_n$ for  the complete graph of order $n$. 
Two graphs are {\it disjoint} if they have no vertex in common. The {\it union} of simple graphs $G$ and $H$ is the graph $G\cup H$ with vertex set $V (G)\cup V (H)$ and edge set $E (G)\cup E (H).$ If $G$ and $H$ are disjoint, we refer to their union as a {\it disjoint union,} and generally denote it by $G + H.$  
For graphs we will use equality up to isomorphism, so $G = H$ means that $G$ and $H$ are isomorphic.
A {\it tree} is a connected graph without cycles, and a {\it forest} is a graph without cycles.

For graphs $H$ and $F$, let $\mathcal{N}(H,G)$ denote the number of copies of $H$ in a graph
$G$, and let
$$
\ex(n,H,F)=\max\{\mathcal{N}(H,G): |G|=n \text{ and } G \text{ is }F\text{-free}\}.
$$
An $n$-vertex $F$-free graph attaining this maximum is called an
\emph{extremal graph}.
Thus the classical Tur\'an number $\ex(n,F)$ is the case $H=K_2.$ 
For each non-bipartite graph $F,$ 
the celebrated Erd\H{o}s-Stone-Simonovits theorem~\cite{Erdos2,Erdos3} determines an asymptotic formula for $\ex(n, F).$  Hence it is natural and interesting to study the Tur\'an number $\ex(n,B)$ of a bipartite graph $B.$ For more advances in this topic, we refer the reader to the survey~\cite{Furedi}. In particular, let $T$ be a tree of order $k,$ considering the disjoint copies of $K_{k-1}$ shows that $\ex(n,T)\ge \frac{n(k-2)}{2}.$ The following is a well-known conjecture. 
\begin{conjecture}[Erd\H{o}s-S\'os~\cite{Erdos1}]\label{ES}
Let $T$ be a tree of order $k.$ The bound 
$$
\ex(n,T)\ge\frac{n(k-2)}{2}
$$ is tight.
\end{conjecture}
Conjecture~\ref{ES} is still open, however it has been confirmed in some special cases. 
For example,  Conjecture~\ref{ES}  was showed to be true  when $G$ is $C_4$-free~\cite{Sacle}, $P_{k+5}$-free~\cite{Eaton};  when 
$k$  is large  compared  to $n$~\cite{ZhouB,Slater,Woz,Tiner,YLT,Gor1}; for  some  special  classes  of  trees~\cite{FGH,FGH2,Gil1,Mc1}, and for spectral version~\cite{ChenM,Cioaba}.

The systematic study of this
generalized Tur\'an function was initiated by Alon and Shikhelman~\cite{AlonShikhelman};
see papers~\cite{chenyang,Gerbner,luo,wang,wuyuan,ZnPeng,zhuchen} and a nice survey~\cite{GerbnerPalmerSurvey}.

Let $T$ be a tree of order $k,$  and let $3\le r\le k-1$. If
$n=a(k-1)+b,$  where $0\le b<k-1,$ 
then the graph
$aK_{k-1}+ K_b$ 
is $T$-free, and 
$$
\mathcal{N}(K_r, aK_{k-1}+ K_b)= a\binom{k-1}{r}+\binom br.
$$
Gerbner and Palmer~\cite[Conjecture 6.5]{GerbnerPalmerSurvey} 
and, independently,  
Zhao and Peng~\cite{Peng1}
conjectured that this construction is always
extremal.

\begin{conjecture}[Gerbner-Palmer~\cite{GerbnerPalmerSurvey}, Zhao-Peng~\cite{Peng1}]\label{GP}
Let $T$ be a tree of order $k,$ let $3\le r\le k-1$ and let 
$n=a(k-1)+b$ with $0\le b<k-1.$   Then
$$
\ex(n,K_r,T)=a\binom{k-1}{r}+\binom br.
$$
\end{conjecture}

Zhou and Yuan~\cite{ZhouYuan} verified Conjecture~\ref{GP} for
$r=k-2$ and for $r=k-3\ge5$, and in those cases they also characterized the extremal graphs. 
Zhao and Peng~\cite{Peng1} confirmed Conjecture~\ref{GP} for $r=k-d$, where $d\ge 2$ and $k\ge d^2-d+3$, and characterized all extremal graphs. 
In this paper, we show that Conjecture~\ref{GP} holds for a range whose width grows linearly with $k$, and characterize all extremal graphs. Our main result is as follows. 

\begin{theorem}\label{main}
Conjecture~\ref{GP} holds for $r \ge \left\lceil (2k-1)/3\right\rceil.$ 
Moreover, the extremal graphs are exactly the following:
\begin{wst}
	\item[\rm (i)] If $0\le b<r,$ then $G=aK_{k-1}+H,$ where $H$ is an arbitrary graph of order $b;$
	\item[\rm (ii)] If $r\le b<k-1,$ then $G=aK_{k-1}+K_b.$ 
\end{wst}
\end{theorem}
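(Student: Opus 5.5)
We argue by induction on $n$, working one connected component at a time. Let $G$ be a $T$-free graph of order $n$ with the maximum number of copies of $K_r$. The first reduction is to components. Write $f(m)=\lfloor m/(k-1)\rfloor\binom{k-1}{r}+\binom{m\bmod (k-1)}{r}$. This function is superadditive in the sense we need: $f(m_1)+f(m_2)\le f(m_1+m_2)$, with a clean description of when equality holds. Hence it suffices to prove the following statement for a connected $T$-free graph $C$ of order $m$:
\[
\mathcal N(K_r,C)\le \binom{k-1}{r}\frac{m}{k-1}\cdot\mathbf 1_{m\ge k}+\binom{m}{r}\mathbf 1_{m\le k-1},
\]
where for $m\ge k$ we aim for the sharper bound $\mathcal N(K_r,C)\le f(m)$ with strict inequality unless some extremal structure appears. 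The superadditivity claim reduces to the elementary inequality $\binom{x}{r}+\binom{y}{r}\le\binom{k-1}{r}+\binom{x+y-k+1}{r}$ for $x,y\le k-1\le x+y$, which follows from convexity of $\binom{\cdot}{r}$. Components of order at most $k-1$ are then handled trivially.

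The key structural step concerns a connected component $C$ with $|C|\ge k$. We want to show that every $r$-clique of $C$ lies inside a small set and that these cliques are sparse. Since $r\ge\lceil(2k-1)/3\rceil$, any $K_r$ in $C$ has $r>2(k-1)/3$ vertices. Take a maximum clique $Q$ of size $q\ge r$; because $T$ is not contained in $C$, we have $q\le k-1$. Take a vertex $v\notin Q$ adjacent to $Q$ (one exists by connectivity). We embed $T$ greedily: pick a leaf $\ell$ of $T$ with neighbour $p$, embed $T-\ell$ into $Q$ together with possibly $v$, and use $v$ as the image of $\ell$. This shows $q\le k-2$ as soon as $|C|\ge k$. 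More generally, for two distinct $r$-cliques $A,B$ in $C$, we would show $|A\cap B|$ is large. The intended argument: if two $r$-cliques share few vertices, then $A\cup B$ together with connecting paths contains every tree on $k$ vertices. This uses that any tree on $k$ vertices has a vertex cut splitting it into parts of size at most $2k/3$, the standard centroid-type separator. Each part is embedded into one of the cliques, both of which have more than $2(k-1)/3$ vertices. This is exactly where the threshold $\lceil (2k-1)/3\rceil$ enters. The conclusion is that all $r$-cliques in $C$ pairwise intersect heavily. They therefore all lie within a "cluster" whose union $U$ has size at most $k-1$, or else a $T$ embeds.

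The counting step comes next. Once all $K_r$'s in $C$ live in a union $U$ with $|U|\le k-2$ (the case $|U|=k-1$ with $|C|\ge k$ again yields $T$ via an outside neighbour), we get $\mathcal N(K_r,C)\le\binom{k-2}{r}$. For $m\ge k$ this is strictly less than $f(m)$, since $f(m)\ge\binom{k-1}{r}$. So in an extremal graph every component of order at least $k$ is suboptimal: it could be replaced by $K_{k-1}$ plus the leftover vertices without losing cliques. Therefore every component of an extremal graph has order at most $k-1$, and the problem becomes maximizing $\sum\binom{m_i}{r}$ subject to $\sum m_i=n$ and $m_i\le k-1$. By convexity the optimum uses as many parts of size $k-1$ as possible, and the remainder is then forced. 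When $b<r$ the leftover $b$ vertices contribute nothing, so they may form an arbitrary graph $H$; when $b\ge r$ they must form $K_b$. This gives the characterization in (i) and (ii). The case where the leftover vertices are spread over several small pieces is ruled out by strict convexity when $b\ge r$.

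The main obstacle is the clustering step: showing that two $r$-cliques in a connected $T$-free graph with $|C|\ge k$ cannot be "far apart" or only weakly overlapping. This needs an embedding lemma that is valid for every tree $T$ of order $k$, including stars and paths. I would first try a separator vertex $c$ of $T$ and split the branches of $T-c$ into two groups, each of size at most $r-1$. The clique $A$ hosts $c$ and the first group, and the second group is routed into $B\setminus A$ via an edge or a short path. The size counts $|A\cup B|\ge k$ must be checked in each overlap case. Stars are degenerate: a star $K_{1,k-1}$ embeds whenever some vertex has degree at least $k-1$, and this must be verified separately using connectivity to an outside vertex.
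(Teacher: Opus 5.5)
Your reduction to parts via convexity is fine (it is Lemma~\ref{ZY}), but the clustering step fails, and the failure is not a missing detail. In a connected $T$-free graph two $r$-cliques need not meet at all, so the claim that $A\cup B$ plus connecting paths contains every tree on $k$ vertices is false. Take $k=7$, $r=5=\lceil 13/3\rceil$ and $T=K_{1,6}$. Two disjoint copies of $K_5$ joined by a single edge form a connected graph $C$ of order $10$ with maximum degree $5$, so $C$ is $T$-free. Yet it contains two disjoint $5$-cliques, so $\mathcal N(K_5,C)=2>\binom{k-2}{r}=1$. Routing ``via an edge or a short path'' breaks because the image of the separator vertex may need several edges into $B$. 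Likewise, $|U|=k-1$ plus an outside neighbour yields $T$ only when $G[U]$ is complete. The remedy is to stop working with graph components and group the $r$-cliques by the components of their intersection graph; the vertex unions $U(\mathcal C)$ of these classes are pairwise disjoint, so your convexity bookkeeping applies to them. Inside a class, your separator argument is correct once the centre is mapped to a common vertex of $A\cap B$. It gives $|A\cup B|\le k-1$, hence $|A\cap B|\ge 2r-k+1$, and since $3r-2k+2\ge 1$ this propagates along paths to all pairs in the class.

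Even so, a second idea is missing: pairwise unions of size at most $k-1$ do not confine a class. Suppose $k-r\ge 3$ and take the $r$-sets $(A\setminus\{a_i\})\cup\{x_i\}$, $i=1,\dots,s$. They pairwise share $r-2\ge 2r-k+1$ vertices and have pairwise unions of size $r+2\le k-1$. Yet their union has $r+s$ vertices, which exceeds $k-1$ once $s\ge k-r$. So your ``therefore'' (that the whole cluster lies within $k-1$ vertices) does not follow. The paper never proves $|U(\mathcal C)|\le k-1$; it keeps such classes as the set $J$. What it proves instead is a common-neighbourhood bound. Fix $A$ in the class and set $c=k-r$. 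Then every $Q\subseteq A$ with $|Q|\ge 2r-k+1$ has at most $c-1$ common neighbours in $U(\mathcal C)\setminus A$. Otherwise $A$, together with $c$ outside vertices completely joined to a $c$-subset of $Q$, contains $T$. This uses the lemma that every forest on at least $2c$ vertices has an independent $c$-set with at most $c$ neighbours. Counting cliques $B$ in the class by $j=|B\setminus A|$ then gives $|\mathcal C|\le\sum_{j=0}^{c-1}\binom rj\binom{c-1}{j}=\binom{k-1}{r}$, with equality only if $G[U(\mathcal C)]=K_{k-1}$. Both the bound and the characterization of the extremal graphs rest on this count, and nothing in your proposal replaces it.
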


\section{\normalsize Preliminaries}
A graph $G$ is called a
\emph{split graph} if there exists a partition $(V_1,V_2)$ of $V(G)$ such that
$V_1$ is a clique and $V_2$ is an independent set. In addition, if every vertex
of $V_1$ is adjacent to every vertex of $V_2$, then $G$ is called a
\emph{complete split graph}.

We first pose a lemma about tree partition.
\begin{lemma}\label{tree}
Let $k$ and $r$ be two positive integers with $r\ge \left\lceil (2k-1)/3\right\rceil,$
and let $T$ be a tree of order $k.$ Then $T$ contains a vertex $u$ such that the components of $T-u$ can be divided into
two families that respectively contain at most $r-1$ vertices.
\end{lemma}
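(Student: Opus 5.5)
We take $u$ to be a centroid of $T$, i.e. a vertex such that every component of $T-u$ has at most $\lfloor k/2\rfloor$ vertices; such a vertex exists by the classical Jordan argument. Walk from any vertex towards a neighbour whose branch contains more than $k/2$ vertices; the walk never reverses, so it stops at a vertex with all branches of size at most $k/2$. Let the components of $T-u$ have orders $c_1\ge c_2\ge\dots\ge c_m$, so that $\sum c_i=k-1$ and $c_1\le \lfloor k/2\rfloor$. We must split $\{1,\dots,m\}$ into two families, each of total order at most $r-1$. Since $r\ge \lceil (2k-1)/3\rceil$, it suffices to reach $r-1\ge \lceil (2k-1)/3\rceil-1\ge (2k-4)/3$. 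Because the two totals sum to $k-1$, it is enough to find one family $A$ with
$$(k-1)-(r-1)\le \sum_{i\in A}c_i\le r-1,$$
that is, a subset sum lying in the window $[k-r,\,r-1]$. This window is nonempty because $2r-1\ge k-1$ follows from $r\ge (2k-1)/3\ge k/2$. Its width is $2r-1-k\ge (4k-2)/3-1-k=(k-5)/3$.

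We run a greedy procedure: add components in decreasing order of size until the running sum first reaches at least $k-r$. The argument splits into two cases.

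\emph{Case $c_1\ge k-r$.} We take $A=\{1\}$. Then $c_1\le \lfloor k/2\rfloor\le r-1$ holds once $r\ge k/2+1$. This follows from $r\ge (2k-1)/3$ whenever $k\ge 8$; for small $k$ it is checked directly, or we use $\lfloor k/2\rfloor\le r-1$ for the relevant integer values.

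\emph{Case $c_1<k-r$.} Let $j$ be the first index where the partial sum $s_j$ reaches at least $k-r$. Then $s_j\le (k-r-1)+c_j$, and we need this to be at most $r-1$, i.e. $c_j\le 2r-k$. Since $j\ge 2$ and the sizes are decreasing, $c_j\le c_2\le (k-1)/2$. This alone is not enough, so we use the structure more carefully. If $j\ge 3$, then $c_j\le (k-1)/3$, because $c_1\ge c_2\ge c_j$ and $c_1+c_2+c_j\le k-1$. Also $(k-1)/3\le 2r-k$ because $2r-k\ge (4k-2)/3-k=(k-2)/3$; the two bounds differ by $1/3$, and integrality fixes this, since $c_j\le \lfloor (k-1)/3\rfloor\le \lceil (k-2)/3\rceil\le 2r-k$. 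If $j=2$, then $c_1<k-r$ and $c_1+c_2\ge k-r$. If $c_1+c_2>r-1$, we instead take $A=\{2\}$ or a different combination. One option is to note that $c_2\le c_1<k-r$ and switch to the complementary family $\{3,\dots,m\}$, whose sum is $k-1-c_1-c_2<k-r$. The resulting partition $\{1\},\{2,\dots,m\}$ then needs $k-1-c_1\le r-1$, which fails only if $c_1<k-r$. So in this subcase we must argue via the three-family bound.

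The main obstacle is this $j=2$ subcase, where the two largest components together overshoot $r-1$ while each is individually below $k-r$. There I would show directly that this is impossible. Here $c_1+c_2\ge r$ and $c_1,c_2\le k-r-1$ give $r\le 2(k-r-1)$, i.e. $r\le (2k-2)/3$. This contradicts $r\ge\lceil(2k-1)/3\rceil$. This is exactly where the threshold $\lceil(2k-1)/3\rceil$ is sharp, as the spider with three legs of length about $(k-1)/3$ shows. Assembling the cases, with small $k$ verified by hand, gives Lemma~\ref{tree}.
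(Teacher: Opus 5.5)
Your argument is correct and is essentially the paper's proof. You take a centroid $u$, use a single branch when some branch has order at least $k-r$, and otherwise accumulate branches until the total first reaches $k-r$. The separate $j\ge 3$ analysis via $c_j\le (k-1)/3$, and the abandoned detour in the $j=2$ case, are unnecessary. The bound you finally use works for every $j$, since $s_j=s_{j-1}+c_j\le 2(k-r-1)\le r-1$, which is exactly the paper's estimate.
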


\begin{proof} 
If $r\ge k,$ the result holds trivially. 
We next consider the case $r\le k-1.$  We show that there exists a vertex $u$ of $T$ such that each component of $T-v$  has order at most $k/2$. For each $w\in V(T)$, let
$$
m(w)=\max\{|T'|: T' \text{ is a component of }T-w\}.
$$
Choose $u\in V(T)$ such that $m(u)$ is minimum. Suppose to the contrary that $m(u)>k/2.$ 
Let $T^*$ be the component of $T-u$ with $|T^*|=m(u).$ Let $v$ be the unique neighbor of $u$
in $T^*.$ In $T-v$, the component containing $u$ has order $k-m(u)<m(u),$ while every
other component is contained in $T^*-v$ and hence has order at most $m(u)-1.$ Thus
$m(v)<m(u),$ which contradicts the choice of $u.$ Therefore, $m(u)\le k/2.$

Next we are to show that 
the components of $T-u$ can be divided into
two families that respectively contain at most $r-1$ vertices. 
Let $C_1,\dots,C_m$ be the components of $T-u$ with $a_i=|C_i|$, where $a_1\le\cdots\le a_m.$ Then
\begin{align}\label{ai}
	a_i\le \frac{k}{2}\le r-1
	~\text{for each }1\le i\le m,
	~\text{ and }
	\sum_{i=1}^m a_i=k-1.
\end{align}

If $a_m\ge k-r$, let the first family consist of that component $C_m$ and let the second
family consist of all remaining components. By \eqref{ai}, the first family has order at most $r-1$, whereas the second has order at most $k-1-(k-r)=r-1.$

We may therefore assume that $a_m\le k-r-1.$ Hence $a_i\le k-r-1$ for each $1\le i\le m$. Add components to the first
family until their total order is at least $k-r$. 
Since $a_i\le k-r-1$ and $\left\lceil (2k-1)/3\right\rceil\le r,$ it follows that the first family has order at most
$2(k-r-1)\le r-1$. Similarly, the second family consists of all remaining components and hence  has order at most $k-1-(k-r)=r-1.$ 
This proves Lemma~\ref{tree}. 
\end{proof}
Using Lemma~\ref{tree}, we get the following.
\begin{proposition}\label{tree partition}
Let $r,k$ be two positive integers with $r\ge \lceil(2k-1)/3\rceil$, and let $T$ be a tree of order $k.$ Let $G_1$ and $G_2$ be two complete graphs of order $r.$ If $V(G_1)\cap V(G_2)\neq \emptyset$ and $|V(G_1)\cup V(G_2)|\ge k,$ then  $G_1\cup G_2$ contains the tree $T.$
\end{proposition}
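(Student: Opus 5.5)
The plan is to combine Lemma~\ref{tree} with a direct embedding, using that inside a clique any injective placement of vertices preserves adjacency. By Lemma~\ref{tree} (which applies since $r\ge\lceil (2k-1)/3\rceil$), $T$ has a vertex $u$ such that the components of $T-u$ split into two families $\mathcal{F}_1,\mathcal{F}_2$. Their total orders $s_1,s_2$ satisfy $s_1,s_2\le r-1$ and $s_1+s_2=k-1$.

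Set $I=V(G_1)\cap V(G_2)$ with $t=|I|\ge 1$, $A=V(G_1)\setminus I$ and $B=V(G_2)\setminus I$. Then $|A|=|B|=r-t$ and $|V(G_1)\cup V(G_2)|=2r-t\ge k$. Fix $w\in I$ and map $u\mapsto w$.

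It then suffices to find disjoint sets $X\subseteq V(G_1)\setminus\{w\}$ with $|X|=s_1$ and $Y\subseteq V(G_2)\setminus\{w\}$ with $|Y|=s_2$. Given these, map the vertices of the components in $\mathcal{F}_1$ bijectively onto $X$ and those in $\mathcal{F}_2$ onto $Y$, in any way. Each edge of $T$ then lies inside $V(G_1)$ or inside $V(G_2)$: edges within a component of $\mathcal{F}_i$ go to $X\subseteq V(G_i)$, and edges from $u$ to $\mathcal{F}_i$ join $w$ to $X$ or $Y$, with $w\in V(G_1)\cap V(G_2)$. Since each $G_i$ is complete, every such edge is present in $G_1\cup G_2$, so the map is an embedding of $T$.

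To choose $X$ and $Y$, fill $X$ greedily, first from $A$ and then from $I\setminus\{w\}$. This is possible because $|V(G_1)\setminus\{w\}|=r-1\ge s_1$.

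\textbf{Case $s_1\le r-t$.} Then $X\subseteq A$, and $Y$ may be any $s_2$ vertices of $B\cup(I\setminus\{w\})$, a set of size $r-1\ge s_2$.

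\textbf{Case $s_1>r-t$.} Then $X$ uses all of $A$ together with $s_1-(r-t)$ vertices of $I\setminus\{w\}$. The vertices left for $Y$ are $B$ and the rest of $I\setminus\{w\}$, which number
$$(r-t)+(t-1)-(s_1-r+t)=2r-t-1-s_1\ge k-1-s_1=s_2.$$
Here the inequality uses the hypothesis $|V(G_1)\cup V(G_2)|\ge k$.

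The only real content is this counting step, which is where both hypotheses enter: the shared vertex $w$ hosts $u$, and the union size guarantees enough room for $Y$. The tree-structural difficulty has already been absorbed into Lemma~\ref{tree}.
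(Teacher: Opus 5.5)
Your proposal is correct and matches the paper's proof: $u$ goes to a common vertex, the first family is packed into $G_1$ using the private vertices $V(G_1)\setminus V(G_2)$ first, the case split is on whether $|V(G_1)\setminus V(G_2)|\ge s_1$, and the union-size hypothesis leaves room for the second family in $G_2$. Your explicit count $2r-t-1-s_1\ge s_2$, and your care to keep $w$ out of $X$, spell out details the paper leaves implicit.
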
 
\begin{proof} 
For the tree $T,$ by Lemma~\ref{tree}, there exists a vertex $u\in V(T)$ 
such that $T-u$ contains two families of components with order $s$ and $k-1-s,$ respectively, where $\max\{s,k-1-s\}\le r-1.$  Since $V(G_1)\cap V(G_2)\neq \emptyset,$ there exists a vertex $x\in V(G_1)\cap V(G_2)$ corresponding to the vertex $u\in V(T).$ 

If $|V(G_1)\setminus V(G_2)|\ge s,$ 
then $G_1- V(G_2)$ contains the subgraph of $T$ induced by the first family, and $G_2-x$  contains the subgraph of $T$ induced by the second family, because $|G_2-x|=r-1.$ Thus $G_1\cup G_2$ contains $T.$ 

If $|V(G_1)\setminus V(G_2)|< s,$ then we may choose a vertex subset $U$ with $V(G_1)\setminus V(G_2)\subseteq U\subseteq V(G_1)$ and $|U|=s$ such that  the subgraph of $G_1$ induced by $U$ contains the subgraph of $T$ induced by the first family. 
Then $ (V(G_1)\cup V(G_2) ) \setminus \left( \{x\}\cup U \right)= V(G_2)  \setminus \left( \{x\}\cup U \right)$ has order at least $k-1-s,$ and thus contains the subgraph of $T$ induced by the second family. 
This proves Proposition~\ref{tree partition}. 
\end{proof}

\begin{lemma}\label{forest}
Let $c\ge 1$ be an integer and let $F$ be a forest of order at least $2c.$ Then $F$ contains an
independent set $S$ such that $|S|=c$ and $|N_F(S)|\le c.$
\end{lemma}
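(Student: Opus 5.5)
We argue by induction on $c$, peeling off leaves. The key observation is that in a forest one can repeatedly find either an isolated vertex or a vertex adjacent to leaves. The goal is to find, inside $F$, a nonempty independent set $S_0$ and a set $N_0=N_F(S_0)$ with $|N_0|\le |S_0|$. We then delete $S_0\cup N_0$ and recurse on the remaining forest.

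\emph{Base step.} If $F$ has an isolated vertex $v$, take $S_0=\{v\}$ and $N_0=\emptyset$. Otherwise every component has at least two vertices. Take a longest path $v_0v_1\cdots v_\ell$ in some component, so $v_0$ is a leaf. If $\ell=1$, the component is $K_2$; take $S_0=\{v_0\}$ and $N_0=\{v_1\}$. If $\ell\ge 2$, maximality of the path forces all neighbours of $v_1$ other than $v_2$ to be leaves. Take $S_0=\{v_0\}$ and $N_0=\{v_1\}$. In every case $S_0$ is independent, $N_F(S_0)=N_0$, $|S_0|\ge1$, $|N_0|\le|S_0|$, and $|S_0\cup N_0|\le 2|S_0|$.

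\emph{Iteration.} Set $F'=F-(S_0\cup N_0)$, which is again a forest. We use the simplest version, with $|S_0|=1$ always, so $|S_0\cup N_0|\le 2$. Then $|F'|\ge 2c-2=2(c-1)$. If $c-1\ge1$, induction gives an independent set $S'$ in $F'$ with $|S'|=c-1$ and $|N_{F'}(S')|\le c-1$. Put $S=S'\cup S_0$.

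\emph{Independence of $S$.} No vertex of $S'$ is adjacent to $S_0$, because all neighbours of $S_0$ lie in $N_0$, which was deleted. Hence $S$ is independent.

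\emph{Neighbourhood of $S$.} We have $N_F(S)=N_F(S')\cup N_0$. The main point to check is that $N_F(S')$ may contain vertices of $N_0$ in addition to $N_{F'}(S')$. It contains no vertex of $S_0$, since $N_F(S_0)=N_0$ is disjoint from $S'$. Therefore $N_F(S')\subseteq N_{F'}(S')\cup N_0$, and so
$$|N_F(S)|\le |N_{F'}(S')|+|N_0|\le (c-1)+1=c.$$
The base case $c=1$ is exactly the base step above, which needs only $|F|\ge 2$. This completes the induction.

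I do not expect a serious obstacle. The only delicate point is the neighbourhood accounting, namely that neighbours of $S'$ outside $F'$ lie in $N_0$. Taking $N_0$ to be the full neighbourhood of $S_0$ is what makes this work.
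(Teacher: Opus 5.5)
Your proof is correct and matches the paper's argument: induction on $c$, removing either an isolated vertex $u$ or a leaf $u$ together with its neighbour $v$, then applying induction to the remaining forest of order at least $2(c-1)$, and finally adding $u$ to $S'$ with $N_F(S)\subseteq N_{F'}(S')\cup\{v\}$. The longest-path step is unnecessary, since any leaf works, but it does no harm.
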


\begin{proof}
We use induction on $c$. If $c=1$, the assertion is trivial. Assume $c\ge2$ and the result holds for $c-1$. 
If $F$ has an isolated vertex $u$, then $F-u$ has at least $2(c-1)$ vertices. By the
induction hypothesis, $F-u$ contains an independent set $S'$ such that $|S'|=c-1$ and
$|N_{F-u}(S')|\le c-1$. Set $S=S'\cup\{u\}$. Then $|S|=c$ and $|N_F(S)|\le c.$

We next assume that $F$ has no isolated vertices. Choose a leaf $u\in F$ and let
$v$ be its unique neighbour. The forest $F'=F-\{u,v\}$ has at least $2(c-1)$ vertices. By the
induction hypothesis, $F'$ has an independent set $S'$ of
order $c-1$ satisfying $|N_{F'}(S')|\le c-1$. Since $N_F(u)=\{v\},$ the
set $S=S'\cup\{u\}$ is independent and
$N_F(S)\subseteq N_{F'}(S')\cup\{v\}.$
Hence $|N_F(S)|\le c$. 
This proves Lemma~\ref{forest}. 
\end{proof}
By Lemma~\ref{forest}, we obtain the following.
\begin{proposition}\label{forest partition}
Let $k,c$ be two positive integers with $k\ge 2c,$ and let $F$ be a forest of order $k.$  Let $G_1$ be a complete graph of order at least $k-c,$ 
and let $G_2$ be a complete split graph of order $2c$ with $V(G_2)=R\cup Y$, where $R$ is a $c$-clique and $Y$ is an independent set of size $c.$ If $V(G_1)\cap V(G_2)=R,$ then  $G_1\cup G_2$ contains the forest $F.$ 
\end{proposition}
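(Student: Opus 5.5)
Apply Lemma~\ref{forest} to $F$ with the given $c$. Since $|F|=k\ge 2c$, it provides an independent set $S\subseteq V(F)$ with $|S|=c$ and $|N_F(S)|\le c$. The plan is to embed $S$ into the independent set $Y$ and $N_F(S)$ into the clique $R$. Every remaining vertex of $F$ then goes into $V(G_1)\setminus R$, or into unused vertices of $R$.

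Here is the construction in order. Let $N=N_F(S)$, so $|N|\le c$. First map $N$ injectively into $R$; this is possible because $|R|=c$. Next map $S$ bijectively onto $Y$. Every edge of $F$ incident to a vertex of $S$ has its other end in $N$, since $S$ is independent and $N$ contains all neighbours of $S$. All such edges therefore go between $Y$ and $R$, and these are edges of $G_2$ because $G_2$ is a complete split graph. Finally, the vertices of $W=V(F)\setminus(S\cup N)$, of which there are $k-c-|N|$, are mapped injectively into $V(G_1)$ minus the image of $N$. This set has at least $(k-c)-|N|$ vertices, because $|G_1|\ge k-c$ and the image of $N$ lies inside $R\subseteq V(G_1)$. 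So the counting works.

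It remains to check every edge of $F$. Edges with both ends in $N\cup W$ land inside $V(G_1)$, which is a clique. Edges between $S$ and $N$ land between $Y$ and $R$, as already noted. There are no edges between $S$ and $W$, and none inside $S$. The map is injective because $Y$ is disjoint from $V(G_1)$: indeed $V(G_1)\cap V(G_2)=R$ and $Y\cap R=\emptyset$. Hence $G_1\cup G_2$ contains $F$.

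The only point needing care is this disjointness, which is exactly what makes the images of $S$ and $N\cup W$ non-overlapping, together with the count $|V(G_1)\setminus \phi(N)|\ge |W|$. Both follow directly from the hypotheses, so I expect no real obstacle; the substantive work was already done in Lemma~\ref{forest}.
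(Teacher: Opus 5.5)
Your proof is correct and follows the paper's argument: take $S$ from Lemma~\ref{forest}, send $S$ onto $Y$, and embed $V(F)\setminus S$ into the clique $V(G_1)$ with $N_F(S)$ placed inside $R$. You also make explicit two points the paper leaves implicit: $N_F(S)$ must land in $R$ so that the edges between $S$ and $N_F(S)$ are realized by the complete $R$--$Y$ edges, and $Y\cap V(G_1)=\emptyset$ is what makes the map injective.
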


\begin{proof}
From Lemma~\ref{forest}, there exists an independent set $S$ in the forest 
$F$ such that $|S|=c$ and $|N_F(S)|\le c.$ 
Then the subgraph of $G_1\cup G_2$ induced by $Y$ contains the subgraph of $F$ induced by $S.$  
Since $|N_F(S)|\le c=|R|,$ 
then the subgraph of $G_1\cup G_2$ induced by $V(G_1)$ contains the subgraph of $F$ induced by $V(F)\setminus S.$  
Therefore, $G_1\cup G_2$ contains the forest $F.$ This completes the proof of Proposition~\ref{forest partition}.
\end{proof}

The following lemma will be needed. 

\begin{lemma}[Zhao-Peng~\cite{Peng1}, Zhou-Yuan~\cite{ZhouYuan}]\label{ZY}
Let $k-1\ge r\ge 2,$ let $a\ge0$, and let $0\le b<k-1.$ Assume that 
$x_1,\dots,x_m$ are integers with $0\le x_i\le k-1$ and
$\sum_{i=1}^m x_i\le a(k-1)+b.$ Then
$$\sum_{i=1}^m \binom{x_i}{r}
\le a\binom {k-1}r+\binom br.$$  
Equality holds only if the following conditions hold:
\begin{wst}
	\item[\rm (i)] if $b<r$, then $m=a$ and $x_1=\cdots=x_a=k-1;$
	\item[\rm (ii)] if $b\ge r$, then the multiset $\{x_1,\dots,x_m\}=\{\underbrace{k-1,\ldots,k-1}_{a},b\}.$
\end{wst}
\end{lemma}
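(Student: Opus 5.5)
The plan is to prove the inequality by a smoothing (convexity) argument applied to the function $f(x)=\binom{x}{r}$ on the integers $0\le x\le k-1$, with the usual convention $\binom{x}{r}=0$ for $x<r$. Write $N=a(k-1)+b$ and $S=\sum_i x_i\le N$. We may discard all $x_i=0$, since they contribute nothing.

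\emph{Step 1 (smoothing).} Suppose two indices have $0<x_i\le x_j<k-1$. Replace $(x_i,x_j)$ by $(x_i-1,x_j+1)$. The change in $\sum f(x_i)$ is
$$\binom{x_j}{r-1}-\binom{x_i-1}{r-1}\ge 0,$$
because $x_j\ge x_i-1$. It is strictly positive whenever $x_j\ge r-1$. The sum $S$ is unchanged. The quantity $\sum x_i^2$ strictly increases, so iterating terminates. At the end at most one value lies strictly between $0$ and $k-1$. The configuration is therefore $q$ copies of $k-1$ together with one value $t$, where $0\le t<k-1$ and $q(k-1)+t=S\le N$.

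\emph{Step 2 (comparison).} If $q=a$, then $t\le b$, so $q\binom{k-1}{r}+\binom{t}{r}\le a\binom{k-1}{r}+\binom{b}{r}$. If $q<a$, then $\binom{t}{r}<\binom{k-1}{r}$, since $t\le k-2$ and $r\le k-1$. Hence $q\binom{k-1}{r}+\binom{t}{r}<(q+1)\binom{k-1}{r}\le a\binom{k-1}{r}$. The case $q>a$ is impossible because $q(k-1)\le S\le N<(a+1)(k-1)$. This proves the inequality.

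\emph{Step 3 (equality).} This is the delicate step, because a smoothing move can have zero gain when both values are below $r-1$. I would argue directly on the original multiset, assuming equality holds.

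First, every $x_i$ lies in $\{0\}\cup[r,k-1]$ or else contributes $0$. Any $x_i\in(0,r)$ can be deleted without changing the sum of $f$, so it suffices to analyse the entries with $x_i\ge r$. The "only if" statement is meant for these after deleting zero contributions, or else $m$ counts only such entries.

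Among entries $\ge r$, if two of them lie strictly between $0$ and $k-1$, a smoothing move is strict, since $x_j\ge r>r-1$. This would give a strictly larger sum satisfying the same constraints, contradicting the bound just proved. So at most one entry $\ge r$ is different from $k-1$.

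Step 2 with strict inequalities then forces $q=a$ and $\binom{t}{r}=\binom{b}{r}$. If $b<r$, this means $t<r$, so there is no extra clique-contributing entry, and we get $m=a$ entries equal to $k-1$; this is (i). If $b\ge r$, it forces $t=b$, because $\binom{\cdot}{r}$ is strictly increasing on $[r-1,\infty)$ and the sum constraint gives $t\le b$; this is (ii).

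The main obstacle is exactly this equality bookkeeping: the small entries $0<x_i<r$, which contribute zero, must be interpreted correctly in condition (i). I will handle them by the deletion remark above, matching how the lemma is used, namely for component orders of cliques that actually contain $K_r$.
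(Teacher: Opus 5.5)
The paper gives no proof of this lemma; it quotes it from Zhao--Peng and Zhou--Yuan and uses it as a black box, so there is no in-paper argument to compare with. Your smoothing argument is a correct, self-contained proof.
\begin{itemize}
\item The increment $\binom{x_j}{r-1}-\binom{x_i-1}{r-1}$ is computed correctly. It is nonnegative, and strictly positive once $x_j\ge r-1$.
\item The potential $\sum x_i^2$ guarantees that the smoothing terminates.
\item In Step~2, $b<k-1$ rules out $q>a$, and $t\le k-2$ makes $q<a$ strictly worse. This settles the inequality.
\end{itemize}

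Your equality analysis is also correct, and your caveat about zero-contribution entries is genuinely needed. As literally stated, the equality clause is false, because entries contributing nothing can be appended. Take $k-1=5$ and $r=3$:
\begin{itemize}
\item With $a=1$, $b=0$, the tuple $(5,0)$ attains equality but has $m=2\neq a$.
\item With $a=1$, $b=4$, the tuple $(5,4,0)$ attains equality, but its multiset is not $\{5,4\}$.
\item With $0<b<r$, even a positive entry is possible: $(5,b)$ attains equality with $m=2$.
\end{itemize}

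The correct statement concerns the multiset of entries with $x_i\ge r$; equivalently, one adds the hypothesis $x_i\ge r$ for all $i$. That is exactly what you prove. It is also all the paper needs: in the proof of Theorem~\ref{main}, every $x_i=|U(\mathcal C_i)|$ is at least $r$, because each $K_r$-component contains an $r$-clique.

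Under that reading, your argument delivers (i) and (ii) as stated. One strict smoothing move on two entries in $[r,k-2]$ contradicts the bound, so at most one such entry survives. Strict monotonicity of $\binom{\cdot}{r}$ on $[r-1,\infty)$ then pins that entry down.
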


The following lemma is an immediate consequence of Vandermonde's identity.
\begin{lemma}\label{vandermonde}
For nonnegative integers $m,n$ and $s$ with $\min\{m,n\}\ge s,$  
$$
\sum_{j=0}^{s}\binom{m}{j}\binom{n}{s-j}
=
\binom{m+n}{s}.
$$
\end{lemma}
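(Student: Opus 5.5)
This is an immediate consequence of Vandermonde's identity. I will count the $s$-element subsets of a set of size $m+n$ in two ways.

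Write the ground set as the disjoint union $A\cup B$, with $|A|=m$ and $|B|=n$. The number of $s$-subsets of $A\cup B$ is $\binom{m+n}{s}$. Alternatively, classify each such subset $X$ by $j=|X\cap A|$, so that $|X\cap B|=s-j$.
\begin{itemize}
\item For a fixed $j$, the part $X\cap A$ can be chosen in $\binom{m}{j}$ ways and the part $X\cap B$ in $\binom{n}{s-j}$ ways.
\item These choices are independent, so there are $\binom{m}{j}\binom{n}{s-j}$ subsets $X$ with $|X\cap A|=j$.
\item Summing over all possible values of $j$ counts every $s$-subset exactly once.
\end{itemize}

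The only point to check is the range of $j$. A priori $j$ runs over $\max\{0,s-n\}\le j\le \min\{s,m\}$. The hypothesis $\min\{m,n\}\ge s$ gives $s-n\le 0$ and $m\ge s$, so this range is exactly $0\le j\le s$. In any case, the terms outside the natural range vanish under the convention $\binom{p}{q}=0$ for $q>p$, so the sum over $0\le j\le s$ loses nothing.

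Equating the two counts gives
$$
\sum_{j=0}^{s}\binom{m}{j}\binom{n}{s-j}=\binom{m+n}{s},
$$
which is the statement of Lemma~\ref{vandermonde}. There is no real obstacle here: the hypothesis only serves to make the summation range $0\le j\le s$ exact.

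Equivalently, the identity can be read off by comparing the coefficient of $x^s$ on both sides of $(1+x)^m(1+x)^n=(1+x)^{m+n}$.
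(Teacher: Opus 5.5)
Your double-counting argument is correct and is the standard proof of Vandermonde's identity, which the paper simply cites as immediate without giving any proof. Your remark that the hypothesis $\min\{m,n\}\ge s$ only serves to make the range $0\le j\le s$ exact is also accurate, since under the convention $\binom{p}{q}=0$ for $q>p$ the identity holds without it.
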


\section{\normalsize Proofs} 
The following notations are frequently used in the forthcoming discussions. 
For a graph $G,$ let $\mathcal{K}_r(G)$ denote the family of 
$r$-cliques of $G$. 
Define an auxiliary graph 
$\Gamma_r(G)$ 
with vertex set $\mathcal{K}_r(G),$ 
where two members are adjacent if they intersect. 
Then $V (\mathcal{C})$ is the vertex set of a component $\mathcal{C}$ of $\Gamma_r(G).$  Hence, $V (\mathcal{C})\subseteq \mathcal{K}_r(G).$ 
The components of $\Gamma_r(G)$ are called the {\it $K_r$-components} of $G.$ 
For a $K_r$-component $\mathcal{C}$ of $G,$  
let 
$U(\mathcal{C})=\bigcup_{A\in V(\mathcal{C})}A.$ Hence, $U(\mathcal{C})\subseteq V(G),$ 
and the sets $U(\mathcal{C})$ corresponding to distinct $K_r$-components are vertex-disjoint.

\begin{lemma}\label{intersect}
Let $r$ and $k$ be two integers with $k-1\ge r\ge \left\lceil (2k-1)/3\right\rceil$.
Let $T$ be a tree of order $k$, and let $G$ be a $T$-free graph. 
\begin{wst}
	\item[{\rm (i)}] For any pair $r$-cliques $A$ and $B$ in $G$, if $A\cap B\ne \emptyset$, then $|A\cup B|\le k-1$. Moreover, $|A\cap B|\ge 2r-k+1$.
	\item[{\rm (ii)}] Any two members of the same $K_r$-component of $G$ intersect at least $2r-k+1$ vertices.
\end{wst}
\end{lemma}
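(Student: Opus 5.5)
Part (i) follows from Proposition~\ref{tree partition}, and part (ii) is then proved by induction along a path in $\Gamma_r(G)$, using (i) at each step.

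\textbf{Part (i).} Let $A$ and $B$ be $r$-cliques of $G$ with $A\cap B\ne\emptyset$. Suppose, for contradiction, that $|A\cup B|\ge k$. Then $G[A]$ and $G[B]$ are two complete graphs of order $r$. They share a vertex, and their union has at least $k$ vertices. Since $r\ge\lceil(2k-1)/3\rceil$, Proposition~\ref{tree partition} applies and gives a copy of $T$ in $G[A]\cup G[B]\subseteq G$. This contradicts $G$ being $T$-free, so $|A\cup B|\le k-1$. Inclusion--exclusion then gives
\[
|A\cap B|=|A|+|B|-|A\cup B|\ge 2r-(k-1)=2r-k+1 .
\]

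\textbf{Part (ii).} Let $A$ and $B$ lie in the same $K_r$-component $\mathcal{C}$. Then there is a path $A=A_0,A_1,\dots,A_t=B$ in $\Gamma_r(G)$, where consecutive cliques intersect. I want to show $|A\cap B|\ge 2r-k+1$. The natural approach is induction on $t$, proving that $A_0$ intersects every $A_i$.

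The key observation is that it suffices to have $A_0\cap A_i\ne\emptyset$ for each $i$, since part (i) then immediately upgrades this to $|A_0\cap A_i|\ge 2r-k+1$. For the inductive step, suppose $A_0\cap A_{i}\neq\emptyset$. By (i), both $A_0\cup A_i$ and $A_i\cup A_{i+1}$ have at most $k-1$ vertices. Hence
\[
|A_i\setminus A_0|\le k-1-r \quad\text{and}\quad |A_i\setminus A_{i+1}|\le k-1-r.
\]
Therefore
\[
|A_0\cap A_{i+1}|\ge |A_0\cap A_i\cap A_{i+1}|\ge r-2(k-1-r)=3r-2k+2.
\]
From $r\ge\lceil(2k-1)/3\rceil$ we get $3r\ge 2k-1$, so $3r-2k+2\ge1$. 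Thus $A_0\cap A_{i+1}\ne\emptyset$. Applying (i) once more gives $|A_0\cap A_{i+1}|\ge 2r-k+1$, which completes the induction and proves (ii).

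The main point to check is that the intersection does not decay along the path. This is handled by re-applying (i) at every step, which resets the bound to $2r-k+1$ each time rather than letting losses accumulate. The threshold $3r\ge 2k-1$ is exactly what guarantees the single-step bound $3r-2k+2$ is positive.
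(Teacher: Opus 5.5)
Your proposal is correct and follows essentially the same route as the paper: (i) via Proposition~\ref{tree partition} plus inclusion--exclusion, and (ii) by induction along a path in $\Gamma_r(G)$, where $3r-2k+2\ge 1$ forces $A_0$ to meet the next clique and (i) then restores the bound $2r-k+1$. Your set-difference estimate $r-2(k-1-r)$ is just a rephrasing of the paper's bound $|A_0\cap A_i|\ge 2(2r-k+1)-r$.
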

\begin{proof}
(i) Note that $|A|=|B|=r$ and $|A\cap B|\ge 1$. If $|A\cup B|\ge k$, then by Proposition~\ref{tree partition}, $G[A\cup B]$ contains a subgraph isomorphic to $T$, a contradiction.
Therefore $|A\cup B|\le k-1,$
and hence $|A\cap B|=|A|+|B|-|A\cup B|\ge 2r-k+1.$

(ii) 
Note that a graph is connected if and only if there is a path between any two vertices. 
Let $A_0A_1\cdots A_t$ be a path in a $K_r$-component, where
$A_0,A_1,\dots,A_t$ are $r$-cliques of $G.$ It suffices to prove that
$|A_i\cap A_j|\ge 2r-k+1$ for $i,j\in\{0,1,2,\dots,t\}.$

We assert that $|A_0\cap A_i|\ge 2r-k+1$. Use induction on $i$. If $i=1,$ then by (i), $|A_0\cap A_1|\ge 2r-k+1.$ Suppose it holds for
$i-1$. Thus, $|A_0\cap A_{i-1}|\ge 2r-k+1.$ By (i), $|A_{i-1}\cap A_i|\ge 2r-k+1.$ 
Hence
$$|A_0\cap A_i|\ge 2(2r-k+1)-r=3r-2k+2\ge1,$$
where the last inequality is exactly $r\ge \left\lceil (2k-1)/3\right\rceil$. Thus $A_0$ and $A_i$ intersect. By (i), it follows that $|A_0\cap A_i|\ge 2r-k+1.$
\end{proof}

We are now ready to prove Theorem~\ref{main}.
\begin{proof}[\bf Proof of Theorem~\ref{main}]
Let $T$ be a tree of order $k.$ The graph $aK_{k-1}+ K_b$ is $T$-free and hence
$$
\ex(n,K_r,T)\ge \mathcal N(K_r,aK_{k-1}+ K_b)=  a\binom {k-1}r+\binom br.
$$
Let $G$ be an arbitrary $T$-free graph of order $n$. We are to show that  $\mathcal N(K_r,G) \le a\binom {k-1}r+\binom br.$ 
Let 
$\mathcal C_1,\dots,\mathcal C_m$
be all components of $\Gamma_r(G),$ 
that is, $\mathcal C_1,\dots,\mathcal C_m$
are all $K_r$-components of $G$. 
\begin{claim}\label{c1}
	$|\mathcal{C}_i|\le \binom {k-1}{r}$
	for $1\le i\le m,$
	with equality if and only if
	$G[U(\mathcal C_i)]= K_{k-1}.$ 
\end{claim}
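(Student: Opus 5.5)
The plan is to split according to the size of $U(\mathcal C_i)$. Every member of $\mathcal C_i$ is an $r$-subset of $U(\mathcal C_i)$ inducing a clique. If $|U(\mathcal C_i)|\le k-1$, then
$$|\mathcal C_i|\le \binom{|U(\mathcal C_i)|}{r}\le\binom{k-1}{r}.$$
Equality forces $|U(\mathcal C_i)|=k-1$ and every $r$-subset of $U(\mathcal C_i)$ to be a clique. Since $r\ge 2$, that means every pair of vertices is adjacent, so $G[U(\mathcal C_i)]=K_{k-1}$. Conversely, if $G[U(\mathcal C_i)]=K_{k-1}$, all $\binom{k-1}{r}$ of its $r$-subsets are cliques, and they pairwise intersect (as $2r>k-1$), so they all lie in $\mathcal C_i$. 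Hence the claim reduces to showing that $|U(\mathcal C_i)|\ge k$ is impossible in a $T$-free graph.

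For that step, assume $|U(\mathcal C_i)|\ge k$ and set $c=k-r$. Note $k\ge 2c$ because $r\ge k/2$, and $c\le 2r-k+1$ is exactly $3r\ge 2k-1$. Fix a member $A\in V(\mathcal C_i)$ and let $W=U(\mathcal C_i)\setminus A$, so $|W|\ge c$. By Lemma~\ref{intersect}(ii), every other member $B$ satisfies $|A\cap B|\ge 2r-k+1$ and $|A\cup B|\le k-1$. Thus $|B\setminus A|\le c-1$, and each $y\in W$ lies in some member $B_y$ meeting $A$ in at least $r-c+1$ vertices. The aim is to exhibit the configuration of Proposition~\ref{forest partition} inside $G[U(\mathcal C_i)]$. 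We take $G_1=G[A]$, a clique of order $r=k-c$. We also need a $c$-set $R\subseteq A$ and $c$ vertices $Y\subseteq W$, each adjacent to all of $R$. Proposition~\ref{forest partition}, applied with $F=T$, then yields a copy of $T$, a contradiction.

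The main obstacle is finding such $R$ and $Y$. A single clique $B$ supplies at most $c-1$ outside vertices, so $Y$ must be collected from several members. We then need their traces on $A$ to share a common $c$-subset. I would try to exploit the pairwise bound of Lemma~\ref{intersect}(ii) among the chosen $B$'s, not just their intersections with $A$. Concretely, I would choose $B_1$ with $B_1\setminus A\ne\emptyset$, then $B_2$ containing a vertex of $W\setminus B_1$, and so on. Along the way I would track the set $A\cap B_1\cap B_2\cap\cdots$. Its size drops by at most $r-(2r-k+1)=c-1$ at each step, but that estimate alone is too weak after many steps. So if this intersection threatens to fall below $c$, I would instead pass to a pair of cliques whose union exceeds $k-1$. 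That contradicts Lemma~\ref{intersect}(i) (equivalently Proposition~\ref{tree partition}).

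If the common-neighbourhood route proves too lossy, the fallback is more direct. I would embed $T$ using the decomposition of Lemma~\ref{tree}, with the centre $u$ mapped into $A\cap B$ for two suitably chosen members $A,B$. One family of components is placed inside $A$ and the other inside $B\cup(\text{further vertices of }W)$. Either way, the whole difficulty is in showing $|U(\mathcal C_i)|\le k-1$; the counting and the equality case are routine once that is established.
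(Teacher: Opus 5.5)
Your handling of the case $|U(\mathcal C_i)|\le k-1$, including both directions of the equality statement, is correct. But the whole claim then rests on the assertion that no $K_r$-component of a $T$-free graph has $|U(\mathcal C_i)|\ge k$, and for this you give only strategies (``I would try'', ``fallback''), not a proof. That assertion is stronger than the claim, and the paper neither proves nor needs it; its main argument even keeps an index set $J$ of components with $|U(\mathcal C_i)|\ge k$.

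More seriously, the tools you plan to use cannot deliver it. Take $k=8$, $r=5$, $c=3$, a clique $A=\{a_1,\dots,a_5\}$, and independent vertices $w_1,w_2,w_3$ with $w_i$ joined to $A\setminus\{a_i\}$. The $5$-cliques are $A$ and $B_i=(A\setminus\{a_i\})\cup\{w_i\}$. They form one $K_5$-component with $|U|=8=k$.
\begin{itemize}
\item Any two of these cliques meet in at least $3=2r-k+1$ vertices and have union at most $7$. So no pair violates Lemma~\ref{intersect}(i), and Proposition~\ref{tree partition} gives nothing.
\item For each of the four possible base cliques, the three outside vertices have only $\{a_4,a_5\}$ as common neighbours inside it. So the $c$-set $R$ needed for Proposition~\ref{forest partition} does not exist.
\item Your running intersection $A\cap B_1\cap B_2\cap B_3=\{a_4,a_5\}$ falls below $c$, and there is no pair with union $\ge k$ to pass to.
\end{itemize}
This graph does contain every tree of order $8$, but only via a different embedding. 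One maps an independent $3$-set of $T$ consisting of vertices of degree at most $4$ onto the $w_i$, then uses Hall's theorem to place the rest on $A$. So ruling out $|U|\ge k$ in general would need an idea that is not in your proposal.

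The missing idea is to bound $|\mathcal C_i|$ directly rather than $|U(\mathcal C_i)|$. Fix $A\in V(\mathcal C)$, put $X=U(\mathcal C)\setminus A$ and $t=2r-k+1\ge c$. The configuration of Proposition~\ref{forest partition} is used only locally: it shows that every $Q\subseteq A$ with $|Q|\ge t$ has at most $c-1$ common neighbours in $X$. A member $B$ with $|B\setminus A|=j$ (necessarily $j\le c-1$) is determined by two choices:
\begin{itemize}
\item $Q=A\cap B$, with $\binom rj$ choices;
\item a $j$-subset of $\widetilde N_X(Q)$, with at most $\binom{c-1}{j}$ choices.
\end{itemize}
Hence $|\mathcal C|\le\sum_{j=0}^{c-1}\binom rj\binom{c-1}{j}=\binom{k-1}{r}$ by Lemma~\ref{vandermonde}. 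Only in the equality case does the paper show that all the sets $\widetilde N_X(A\setminus\{a_i\})$ coincide and contain $B\setminus A$ for every member $B$. That yields $|U(\mathcal C)|=k-1$ and $G[U(\mathcal C)]=K_{k-1}$.
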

\begin{proof}[Proof of Claim~\ref{c1}]
	Choose an abitrary $K_r$-component $\mathcal C$ of $G$. 
	Choose an $r$-clique $A\in\mathcal C$. Let 
	$X=U(\mathcal C)\setminus A$ and let $t=2r-k+1.$ Recall that $r\le k-1.$ Then $t\le r.$
	Let $c=k-r.$ 
	Since $r\ge \left\lceil (2k-1)/3\right\rceil$, it follows that $2r-k+1\ge k-r$, i.e., $t\ge c.$
	For any subset $Q\subseteq A$, write 
	$\widetilde N_X(Q)=X\cap\bigcap_{v\in Q}N_G(v).$ 
	We assert that
	\begin{equation}\label{neighbor}
		\left| \widetilde N_X(Q) \right| \le c-1,~\text{for each $Q\subseteq A$ with}~|Q|\ge t.
	\end{equation}
	Suppose to the contrary that $|\widetilde N_X(Q)|\ge c$ for some $Q\subseteq A$ with $|Q|\ge t.$ Choose a set 
	$Y\subseteq \widetilde N_X(Q)$ with $|Y|=c$. 
	Since $c\le t$, we can choose a subset $R\subseteq Q$ with $|R|=c.$ Then $R$ is a clique. Since $Y\subseteq \widetilde N_X(Q),$ each vertex of $Y$ is adjacent to $R.$
	Note $|A|=r=k-c$. 
	Since $r\ge \left\lceil (2k-1)/3\right\rceil$,  we have $k\ge2(k-r)=2c.$  
	Then by Proposition \ref{forest partition}, $G[A\cup Y]$ contains the tree $T,$ a contradiction. Thus, \eqref{neighbor} holds.
	
	For $0\le j\le c-1$, write 
	$\mathcal B_j:=\{B\in V(\mathcal C):|B\setminus A|=j\}.$ Next we determine $|\mathcal B_j|$.
	By Lemma~\ref{intersect}(ii), every $r$-clique $B\in V(\mathcal C)$ satisfies
	$|A\cap B|\ge t$. Choose a $B\in\mathcal B_j$ and let $Q:=A\cap B$. Then 
	$r-j=|Q|\ge t$ and $B\setminus A\subseteq \widetilde N_X(Q).$ 
	By \eqref{neighbor}, $|\widetilde N_X(Q)|\le c-1$. For each fixed
	$Q\subseteq A$, there are at most $\binom{c-1}{j}$ choices for
	$B\setminus A$, while there are $\binom r{r-j}=\binom rj$ choices for $Q$. Hence 
	\begin{align}\label{ey2}
		|\mathcal B_j|\le \binom rj\binom{c-1}{j}~\text{for}~0\le j\le c-1.
	\end{align}
	Summing over $j$ and applying Lemma~\ref{vandermonde} gives 
	\begin{align*}
		|\mathcal C|
		\le \sum_{j=0}^{c-1}\binom rj\binom{c-1}{j}
		=\binom{r+c-1}{c-1}=\binom {k-1}{k-1-r}
		=\binom {k-1}r,
	\end{align*}
	which proves the first part of Claim~\ref{c1}. 
	
	Next we consider the equality case in  Claim~\ref{c1}. Then every inequality above becomes equality. 
	The sufficiency is immediate. We now prove the necessity.
	
	If $c=1,$ then by \eqref{neighbor}, $V(\mathcal C)=\{A\}.$ Thus 
	$G[U(\mathcal C)]= K_{k-1},$ and the equality case in Claim~\ref{c1} holds.   		
	Next we assume that $c\ge 2.$ 
	Let $A=\{a_1,a_2, \ldots,a_r\}.$ 
	We first assert that  
	\begin{align}\label{ey7}
		\widetilde N_X(A\setminus\{a_1\})=\widetilde N_X(A\setminus\{a_2\})=\cdots=\widetilde N_X(A\setminus\{a_r\}).
	\end{align}
	In fact, since inequality \eqref{ey2} becomes equality, for $j=1,$ 
	we have 
	$|\mathcal B_1|= r(c-1),$ and thus 
	\begin{align}\label{ey4}
		\text{
			$\left| \widetilde N_X(A\setminus\{a_i\}) \right|= c-1,$ for each $a_i\in A.$ 
		}
	\end{align} 			
	If $c=2,$ then \eqref{ey4} gives  
	$\left|\widetilde N_X(A\setminus\{a_i\})\right|= 1$ for each $a_i\in  A.$ 
	Suppose to the contrary that 
	$\widetilde N_X(A\setminus\{a_i\})\neq \widetilde N_X(A\setminus\{a_j\})$ 
	for some $i\neq j.$ 
	Then the two cliques 
	$(A\setminus\{a_i\})\cup \widetilde N_X(A\setminus\{a_i\})$ 
	and 
	$(A\setminus\{a_j\})\cup \widetilde N_X(A\setminus\{a_j\})$ 
	have intersection of order $r-2,$ 
	which contradicts Lemma~\ref{intersect}(ii). 
	Then \eqref{ey7} holds for $c=2.$   
	
	If $c\ge 3,$ 
	then for any two vertices $a_i,a_j\in A,$ 
	by definition, 
	we have 
	\begin{align}\label{ey5}
		\widetilde N_X\left( A\setminus\{a_i\}\right) \cup \widetilde N_X\left( A\setminus\{a_j\}\right) \subseteq \widetilde N_X\left( A\setminus\{a_i,a_j\}\right). 
	\end{align}
	Since 
	$|A\setminus \{a_i,a_j\}|=r-2\ge t,$  \eqref{neighbor} means that 
	$|N_X\left( A\setminus\{a_i,a_j\}\right)|\le c-1.$ 
	This, together with \eqref{ey4} and \eqref{ey5}, implies that 
	\begin{align*}
		\widetilde N_X\left( A\setminus\{a_i\}\right)= \widetilde N_X\left( A\setminus\{a_j\}\right) ~\text{for any two vertices}~a_i,a_j\in A, 
	\end{align*}
	which completes the proof of \eqref{ey7}.

	For a clique $B\in V(\mathcal C)\setminus\{A\},$ 
	we choose a vertex $a_k\in A\setminus B.$ 
	By definition, 
	\begin{align}\label{ey9}
		\widetilde N_X\left( A\setminus\{a_k\}\right) \subseteq \widetilde N_X\left( A\cap B\right). 
	\end{align}
	We again apply Lemma~\ref{intersect}(ii) to obtain 
	$|A\cap B|\ge t.$ With \eqref{neighbor} this gives 
	$|\widetilde N_X\left( A\cap B\right)|\le c-1.$ This, together with 
	\eqref{ey9} and \eqref{ey4}, implies that 
	\begin{align*}
		\text{
			$\widetilde N_X\left( A\setminus\{a_k\}\right) = \widetilde N_X\left( A\cap B\right),$ 
			and thus  
			$B\setminus A \subseteq \widetilde N_X\left( A\cap B\right)=\widetilde N_X\left( A\setminus\{a_k\}\right).$
		}
	\end{align*}
	By the arbitrariness of $B$ and \eqref{ey7}, 
	we have that 
	$U(\mathcal C)=A\cup \widetilde N_X\left( A\setminus\{a_1\}\right),$
	which, together with \eqref{ey4} yields 
	$|U(\mathcal C)|=r+c-1=k-1.$ 
	Note that the number of $r$-clique in $G[U(\mathcal C)]$ is $\binom {k-1}{r}.$ 
	Then $G[U(\mathcal C)]=K_{k-1},$ 
	and the proof of Claim~\ref{c1} is complete.
\end{proof}

Now we come back to show our result. 
By definition, $U(\mathcal C_i)\cap U(\mathcal C_j)=\emptyset$ for distinct $i,j.$ Thus
$\sum_{i=1}^m |U(\mathcal C_i)|\le n.$  
We write
$$
I=\{i:|U(\mathcal C_i)|\le k-1\}
~~\text{and}~~
J=\{i:|U(\mathcal C_i)|\ge k\}.
$$
If $i\in I$, then trivially
$|\mathcal C_i|\le\binom{|U(\mathcal C_i)|}{r},$ 
and if $i\in J$, then by Claim~\ref{c1},
$|\mathcal C_i|\le\binom {k-1}r.$ 
Consequently,
\begin{equation}\label{count}
	\mathcal N(K_r,G)
	\le
	\sum_{i\in I}\binom{|U(\mathcal C_i)|}{r}
	+|J|\binom {k-1}r.
\end{equation} 
Note that
$$
\sum_{i\in I}|U(\mathcal C_i)|+|J|(k-1)
\le
\sum_{i\in I}|U(\mathcal C_i)|+\sum_{i\in J}|U(\mathcal C_i)|
\le n=a(k-1)+b.
$$
Hence, 
\begin{align}\label{eq2}
	\sum_{i\in I}|U(\mathcal C_i)|\le (a-|J|)(k-1)+b.
\end{align}
Since $|U(\mathcal C_i)|\le k-1$ for each $i\in I,$ by \eqref{count} and Lemma~\ref{ZY}, it follows that
$$
\mathcal N(K_r,G)\le \sum_{i\in I}\binom{|U(\mathcal C_i)|}{r}
+|J|\binom {k-1}r\le (a-|J|)\binom {k-1}r+\binom br+|J|\binom {k-1}r= a\binom {k-1}r+\binom br,
$$
as desired.  

Now we characterize the extremal graphs. 
Assume that 
$G$ is an extremal graph. 
Then every inequality above becomes equality.  
Thus 
\begin{align}\label{eq1}
	\text{for each $i\in I,$ we have 
		$|\mathcal C_i|=\binom{|U(\mathcal C_i)|}{r},$
	}
\end{align}			 
and  for each $j\in J$ (if any) we have 
$|\mathcal C_j|=\binom {k-1}r.$ 
Suppose to the contrary that there exists $j\in J.$  
Then by Claim~\ref{c1}, $G[U(\mathcal C_j)]= K_{k-1},$ 
which contradicts the definiton of $J.$ 
This implies that $J=\emptyset.$  
Then from \eqref{eq1}, we have 
\begin{align}\label{ep1}
	\text{
		$G[U(\mathcal C_i)]=K_{|U(\mathcal C_i)|}$ 
		for each $1\le i\le m.$
	}
\end{align}

If $0\le b<r,$ 
then from \eqref{eq2}, $J=\emptyset$ and Lemma~\ref{ZY}(i), we have that 
\begin{align*}
	\text{
		$m=a$~~and~~$|U(\mathcal C_1)|=\cdots=|U(\mathcal C_a)|=k-1.$ 
	}
\end{align*}
Note that $G$ is $T$-free. 
This together with \eqref{ep1}  implies that 
$G=aK_{k-1}+H,$ where $H$ is an arbitrary graph of order $b,$ 
as desired.

If  $r\le b< k-1,$ then by Lemma~\ref{ZY}(ii), 
we have that 
$$\{|U(\mathcal C_1)|,\dots,|U(\mathcal C_m)|\}=\{\underbrace{k-1,\ldots,k-1}_{a},b\}.$$
This together with \eqref{ep1} implies that 
$G=aK_{k-1}+K_b.$ 
The proof of Theorem~\ref{main} is complete.
\end{proof}

\section*{\normalsize Acknowledgement}  
The authors thank Professor Xingzhi Zhan for his constant support and guidance. This research was supported by the NSFC Grant No. 12271170
and by Science and Technology Commission of Shanghai Municipality Grant No. 22DZ2229014.

\section*{\normalsize Declaration of AI Use}  
During the preparation of this manuscript, the author used ChatGPT 5.6 to assist with language editing, grammar, clarity, and formatting. The author reviewed and verified all AI-assisted edits and take full responsibility for the content of the manuscript.

\end{document}